\theoremstyle{plain}
\newtheorem{thm}{Theorem}[section]
\newtheorem{prop}[thm]{Proposition}
\newtheorem{defi}[thm]{Definition}
\newtheorem{maintheorem}{Theorem}
\newcommand{\re}{{\mathbb{R}}}
\newcommand{\nat}{{\mathbb{N}}}
\title{Intransitive sectional-Anosov flows on 3-manifolds}
\author{S. Bautista, A. M. L\'opez B., H. M. S\'anchez 
        \thanks{
{\em Key words and phrases}:
Sectional Anosov flow, Maximal invariant set, Lorenz-like singularity, Homoclinic class, Venice mask, Dense periodic orbits.
This work is partially supported by CAPES, Brazil.}}
\date{}
\begin{document}
\maketitle

\begin{abstract}
For each $n\in \mathbb{Z}^+$, we show the existence of Venice masks \index{Venice mask}
(i.e. intransitive sectional-Anosov flows with dense periodic orbits,
\cite{bmp}, \cite{mp2}, \cite{lec}, \cite{ls}) containing $n$ equilibria on certain
compact $3$-manifolds. These examples are characterized because of the maximal invariant set is a finite union
of homoclinic classes. Here, the intersection between two different homoclinic classes is 
contained in the closure of the union of unstable manifolds of the singularities.
\end{abstract}


\section{Introduction}

The dynamical systems theory proposes to find 
qualitative information on the behavior in a determinated system without actually finding the solutions.
An example of this study is the phenomenom of transverse homoclinic points. Birkhoff proved that any
transverse homoclinic orbit is accumulated by periodic points.
With the introduction of {\em uniformly hyperbolic dynamical systems} by Smale \cite{sma}, was 
developed a study of robust models containing infinitely many periodic orbits.
However, the uniform hyperbolicity were soon realized to be less universal properties that 
was initially thought. On the other hand, there exist many classes of systems non hyperbolic that are often the
specific models coming from concrete applications. This motivated weaker formulations of 
hyperbolicity such as existence of dominated splitting, partial hyperbolicity or sectional hyperbolicity.

Recall that a $C^r$ vector field $X$ in $M$ is $C^r$ robustly transitive or $C^r$ robustly periodic depending
on whether every $C^r$ vector field $C^r$ close to it is transitive or has dense periodic
orbits. Are particularly interesting the sectional-hyperbolic sets and 
sectional-Anosov flows, which were introduced in \cite{mo} and \cite{mem} respectively 
as a generalization of the hyperbolic sets and Anosov flows. Their importance is because of the robustly transitive
property in dimension three of attractors sectional-Anosov flows \cite{mpp4}, and the inclusion of important examples
such as the saddle-type hyperbolic attracting sets, the singular horseshoe, the geometric and multidimensional 
Lorenz attractors \cite{abs}, \cite{bpv}, \cite{gw}.

With respect to robustly transitive property, we can mention a clue fact in the scenario of sectional-Anosov flows. 
As consequence of the main result in \cite{apu}, and {\em Theorem 32} in \cite{lec} follows that every sectional-Anosov 
flow with a unique singularity on a compact 3-manifold is $C^r$ robustly
periodic if and only if is $C^r$ robustly transitive. On the other hand, there is not equivalence between transitivity and 
density of periodic orbits (which is true for Anosov flows). Indeed, there
exist examples of sectional-Anosov flows non transitive with dense periodic orbits supported on compact three dimensional 
manifolds. So, a sectional-Anosov flow is said a {\em Venice mask} if it has dense periodic orbits but is not transitive. 
An example of Venice mask with a unique singularity was given in \cite{bmp}, and for three singularities was
provided in \cite{mp}. Recently, \cite{ls} showed the construction the examples with two equilibria. They 
are characterized because the maximal invariant set is non 
disjoint union of two homoclinic classes \cite{mp}, \cite{mp2}, \cite{bmp}, and the intersection between these classes is 
contained in the closure of the union of unstable manifolds of the singularities.

Particularly, was proved in \cite{mp}, \cite{mp2} that every Venice mask with a unique singularity 
has these properties.
The above observations motivate the following questions,

\begin{enumerate}
	\item It is possible to obtain Venice masks with more singularities?
	\item The maximal invariant set of every Venice mask is union of two homoclinic classes?
	\item How is the intersection of these homoclinic classes?
\end{enumerate}

The answer to the first question is positive. We use the ideas developed in \cite{ls} and \cite{mp} for the construction
of these examples, which provide more tools and clues for a general theory of Venice masks. In particular, we construct 
an example with five singularities which is non disjoint union of three homoclinic classes. So, the answer to the second
question is false.

Let us state our results in a more precise way.

Consider a Riemannian compact manifold $M$ of dimension three (a {\em compact $3$-manifold} for short).
$M$ is endowed with a Riemannian metric $\langle\cdot,\cdot\rangle$ and an
induced norm $\lVert\cdot\rVert$. 
		We denote by $\partial M$ the boundary of $M$.
		Let ${\cal X}^1(M)$ be the space
		of $C^1$ vector fields in $M$ endowed with the
		$C^1$ topology.
		Fix $X\in {\cal X}^1(M)$, inwardly
		transverse to the boundary $\partial M$ and denotes
		by $X_t$ the flow of $X$, $t\in I\!\! R$.\\
		
The {\em omega-limit set} of $p\in M$ is the set
$\omega_X(p)$ formed by those $q\in M$ such that $q=\lim_{n \rightarrow \infty}X_{t_n}(p)$ for some
sequence $t_n\to\infty$. The {\em alpha-limit set} of $p\in M$ is the set
$\alpha_X(p)$ formed by those $q\in M$ such that $q=\lim_{n \rightarrow \infty}X_{t_n}(p)$ for some
sequence $t_n\to -\infty$. Given $\Lambda \in M$ compact, we say that $\Lambda$ is {\em invariant} 
if $X_t(\Lambda)=\Lambda$ for all $t\in I\!\! R$.
We also say that $\Lambda$ is {\em transitive} if
$\Lambda=\omega_X(p)$ for some $p\in \Lambda$; {\em singular} if it
contains a singularity and
{\em attracting}
if $\Lambda=\cap_{t>0}X_t(U)$
for some compact neighborhood $U$ of it.
This neighborhood is often called
{\em isolating block}.
It is well known that the isolating block $U$ can be chosen to be
positively invariant, i.e., $X_t(U)\subset U$ for all
$t>0$.
An {\em attractor} is a transitive attracting set.
An attractor is {\em nontrivial} if it is
not a closed orbit.\\

The {\em maximal invariant} set of $X$ is defined by
	$M(X)= \bigcap_{t \geq 0} X_t(M)$.

\begin{defi}\label{defhyp}
		\label{hyperbolic}
		A compact invariant set $\Lambda$ of $X$ is {\em hyperbolic}
		if there are a continuous tangent bundle invariant decomposition
		$T_{\Lambda}M=E^s\oplus E^X\oplus E^u$ and positive constants
		$C,\lambda$ such that

		\begin{itemize}
		\item $E^X$ is the vector field's
		direction over $\Lambda$.
		\item $E^s$ is {\em contracting}, i.e.,
		$
		\lVert DX_t(x) \left|_{E^s_x}\right.\rVert
		\leq Ce^{-\lambda t}$,
		for all $x \in \Lambda$ and $t>0$.
		\item $E^u$ is {\em expanding}, i.e.,
		$
		\lVert DX_{-t}(x) \left|_{E^u_x}\right.\rVert
		\leq Ce^{-\lambda t},
		$
		for all $x\in \Lambda$ and $t> 0$.
		\end{itemize}
\end{defi}
A compact invariant set $\Lambda$ has a {\em dominated splitting} with
respect to the tangent flow if there are an invariant splitting 
$T_{\Lambda}M = E\oplus F$ and positive numbers $K,\lambda$
such that

$$\lVert DX_t(x)e_x\rVert\cdot \lVert f_x\rVert\leq Ke^{-\lambda t} \lVert DX_t(x)f_x\rVert\cdot \lVert e_x\rVert,\qquad
\forall x\in\Lambda, t \geq 0, (e_x , f_x) \in E_x\times F_x .$$

Notice that this definition allows every compact invariant set $\Lambda$
to have a dominated splitting with respect to the tangent flow (See \cite{bamo}):
Just take $E_x = T_xM$ and $F_x = 0$,
for every $x\in\Lambda$ (or $E_x = 0$ and $F_x = T_x M$ for 
every $x\in\Lambda$). 

A compact invariant set $\Lambda$ is {\em partially hyperbolic} if
it has a {\em partially hyperbolic splitting}, i.e., a dominated splitting $T_{\Lambda}M = 
E\oplus F$ with respect to the tangent flow
whose dominated subbundle $E$ is contracting in the sense of
Definition \ref{defhyp}.\\

The Riemannian metric $\langle\cdot ,\cdot\rangle$ of $M$
induces a $2$-Riemannian metric \cite{mv},
$$\langle u, v/w\rangle_p= \langle u, v\rangle_p\cdot
\langle w, w\rangle_p - \langle u, w\rangle_p\cdot \langle v, w\rangle_p,\quad
\forall p\in M, \forall u, v, w \in T_p M.$$
This in turns induces a 2-norm \cite{gah} (or areal metric \cite{kat}) defined by

$$\lVert u, v\rVert =\sqrt{\langle u, u/v\rangle_p}
\qquad \forall p\in M, \forall u, v \in T_p M.$$ 

Geometrically, $\lVert u, v\rVert$ represents the area of the parallelogram 
generated by $u$ and $v$ in $T_p M$.\\

If a compact invariant set $\Lambda$ has a dominated splitting $T_{\Lambda}M = F^s \oplus F^c$ with
respect to the tangent flow, then we say that its central subbundle $F^c$ is {\em sectionally
expanding} if

$$\lVert DX_t (x)u, DX_t (x)v\rVert \geq K^{-1}e^{\lambda t}
\lVert u, v\rVert, \quad\forall x \in \Lambda, u,v\in F^c_x, t \geq 0.$$

Recall that a singularity of a vector field is hyperbolic if the eigenvalues of its linear part
have non zero real part. \\

By a {\em sectional hyperbolic splitting} for $X$ over $\Lambda$ we mean a partially hyperbolic
splitting $T_{\Lambda}M = F^s\oplus F^c$ whose central subbundle $F^c$ is sectionally expanding.

\begin{defi}
A compact invariant set $\Lambda$ is {\em sectional hyperbolic} for $X$ if 
its singularities are hyperbolic and if there is a sectional hyperbolic splitting for $X$ over $\Lambda$.
\end{defi}

\begin{defi}
\label{secflow}
We say that $X$ is a {\em sectional-Anosov flow} if $M(X)$ is a sectional hyperbolic
set.
\end{defi}

The Invariant Manifold Theorem \cite{hps} asserts that if $x$ belongs
to a hyperbolic set $H$ of $X$, then the sets

$$W^{ss}_X(p)  =  \{x\in M:d(X_t(x),X_t(p))\to 0, t\to \infty\} \qquad and$$
$$W^{uu}_X(p)  =  \{x\in M:d(X_t(x),X_t(p))\to 0, t\to -\infty\},$$

are $C^1$ immersed submanifolds of $M$ which are tangent at $p$ to the subspaces $E^s_p$ and $E^u_p$ of $T_pM$ respectively.

$$W^{s}_X(p) =   \bigcup_{t\in \re}W^{ss}_X(X_t(p))\qquad and\qquad W^{u}_X(p)  =   \bigcup_{t\in \re}W^{uu}_X(X_t(p))$$

are also $C^1$ immersed submanifolds tangent to $E^s_p\oplus E^X_p$ and $E^X_p\oplus E^u_p$ at $p$ respectively.\\

We denote by $Sing(X)$ to the set of singularities of $X$,  and $Cl(A)$ to the closure of $A$.

\begin{defi}
\label{ll}
We say that a singularity $\sigma$ of a sectional-Anosov flow $X$ of dimension three is {\em Lorenz-like}
if  it has three real eigenvalues $\lambda^{ss},\lambda^{s},\lambda^u$ with $\lambda^{ss}<\lambda^s<0<-\lambda^s<\lambda^u$.
The strong stable foliation  associated to $\sigma$ and denoted by $\mathcal{F}^{ss}_X(\sigma)$, is the foliation contained 
in $W^s(\sigma)$ which is tangent
to space generated by the eigenvalue $\lambda^{ss}$.
\end{defi}

We denote as $W^s(Sing(X))$ to $\bigcup\limits_{\sigma\in Sing(X)}W^s(\sigma)$.

Respectively, $W^u(Sing(X))=\bigcup\limits_{\sigma\in Sing(X)}W^u(\sigma)$.

\begin{defi}
A periodic orbit of $X$ is the orbit of some $p$ for which there is a minimal
$t > 0$ (called the period) such that $X_t(p) = p$.
\end{defi}

$\gamma$ is a {\em transverse homoclinic orbit} of a hyperbolic periodic orbit $O$ if $\gamma\subset W^s(O)\cap W^u(O)$,
and $T_qM = T_qW^s(O) + T_qW^u(O)$ for some (and hence all) point $q\in\gamma$. 
The homoclinic class $H(O)$ of a hyperbolic periodic orbit $O$ is the closure of the 
union of the transverse homoclinic orbits of $O$. 
We say that a set $\Lambda$ is a homoclinic class if $\Lambda = H(O)$ for some hyperbolic periodic orbit $O$.

\begin{defi}
A Venice mask is a sectional-Anosov
flow with dense periodic
orbits which is not transitive.
\end{defi}

$D^n$ denotes the unit ball in $\re^n$ and $\partial D^n$ the boundary of $D^n$.

An $n$-cell\index{n-cell} is a manifold homeomorphic to the open ball $D^n\setminus\partial D^n$.\\

The following definition appears in \cite{hem}.

\begin{defi}
A handlebody of genus $n\in \nat$ (or a cube with $n$-handles) is a compact 3-manifold with boundary $HB_n$ such that

\begin{itemize}
 \item $HB_n$ contains a disjoint collection of $n$ properly embedded 2-cells.
 \item A 3-cell is obtain of cutting $HB_n$ along the boundary of these 2-cells.  
\end{itemize}

\end{defi}

Observe that a 3-ball is a handlebody of genus 0, whereas a solid torus is a handlebody of genus 1.\\ 

In \cite{mor3} was proved that every orientable handlebody $HB_n$ of genus $n\geq 2$ supports a transitive 
sectional-Anosov flow. In particular these flows have $n-1$ singularities. An example is the geometric 
Lorenz attractor which is supported on a solid bitorus. We show that certain classes of handlebody support 
Venice masks.\\

The following is the statement of the main result in this paper. 

\begin{maintheorem}
\label{thF'}
For each $n\in\mathbb{Z}^+$: 
\begin{itemize}
\item There exists a Venice mask $X_{(n)}$ with $n$ singularities which is supported on some compact 3-manifold $M$.  
In addition, $M(X_{(n)})$ can be decomposed as union of two homoclinic classes.
\item There exists a Venice mask $Y_{(n)}$ supported on some compact 3-manifold $N$ such that $N(Y_{(n)})$ is union of
$n+1$ homoclinic classes.

In both cases, the intersection of two different homoclinic classes of the maximal invariant set, 
is contained in the closure of the union of unstable manifolds of the singularities.
\end{itemize}
\end{maintheorem}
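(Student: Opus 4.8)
The plan is to build these examples by surgery on known building blocks: the two-singularity Venice mask of \cite{ls} and the one-singularity Venice mask of \cite{bmp}, together with the transitive sectional-Anosov flows on handlebodies from \cite{mor3}. For the first family $X_{(n)}$, I would proceed inductively on $n$. The base cases $n=1,2$ are exactly the examples in \cite{bmp} and \cite{ls}. For the inductive step, the idea is to take the $n$-singularity example and graft on one more Lorenz-like singularity by a connected-sum / plumbing construction along a flow box transverse to the boundary: one removes a suitable flow box near a periodic orbit lying in the ``overlap'' region (where the two homoclinic classes already meet) and glues in a copy of the geometric Lorenz piece, arranging the gluing so that the new stable and unstable manifolds of the added singularity fall into the closure of the existing unstable manifolds of the singularities. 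The crucial structural point to preserve is that the whole maximal invariant set remains a union of exactly two homoclinic classes; this is where the choice of where to attach the new handle matters, since the new singularity and its homoclinic connections must be absorbed into one of the two pre-existing classes rather than creating a third.

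For the second family $Y_{(n)}$, the strategy is the opposite: instead of absorbing new singularities, one wants each added singularity to spawn its own homoclinic class. Here I would take the three-singularity / three-class example alluded to in the introduction (the one answering question (2) negatively) as the model, and iterate a construction that attaches a new ``ear'' (a handle carrying one Lorenz-like singularity and one homoclinic class) to the manifold $N$, in such a way that the new class meets the old configuration only along $W^u$ of the singularities. After $n$ such attachments one gets $n+1$ classes. The bookkeeping is: start from a single homoclinic class (an ordinary transitive singular attractor on a handlebody), then perform $n$ attachments, each adding one singularity and one class; transitivity fails because the classes are genuinely distinct (they have distinct periodic orbits not accumulated by the others), but density of periodic orbits is inherited from each block.

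In both cases the verification splits into three routine-but-lengthy checks that I would organize as separate lemmas preceding the theorem's proof: (i) the glued vector field is still inwardly transverse to $\partial M$ and its maximal invariant set is sectional hyperbolic — this follows because sectional hyperbolicity is a $C^1$-open and ``local'' condition and the gluing is done inside flow boxes where one can interpolate the two splittings, so the cone-field criterion for sectional hyperbolicity is preserved; (ii) periodic orbits are dense in $M(X_{(n)})$ (resp. $N(Y_{(n)})$) — this is inherited block-by-block since each block has dense periodic orbits and the gluing region is itself swept by periodic orbits; (iii) the flow is not transitive — one exhibits two points whose omega-limit sets are different homoclinic classes, using that a transitive flow cannot have its maximal invariant set split into two proper compact invariant pieces with distinct recurrent dynamics.

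The main obstacle I anticipate is step (i) combined with the global topological control: making the gluing compatible with sectional hyperbolicity requires the stable foliation $\mathcal{F}^{ss}$ of the new Lorenz-like singularity to match up coherently with the contracting bundle $F^s$ coming from the ambient flow across the entire gluing annulus, and simultaneously arranging that $W^s(\sigma)\cap W^u(\text{other singularities})$ and the homoclinic-class intersections land exactly in $\mathrm{Cl}(W^u(\mathrm{Sing}(X)))$ as claimed. Concretely, one must choose the flow box so that its exit boundary is foliated by strong-stable leaves and the incoming unstable sheet of the added singularity wraps around to feed the existing class; showing that no extra intersections are created outside $\mathrm{Cl}(W^u(\mathrm{Sing}))$ — i.e. that the two classes do not overlap on regular recurrent orbits — is the delicate part, and I expect it to rely on a careful analysis of the return map on a cross-section exactly as in \cite{ls} and \cite{mp}, tracking which strips of the Poincaré section belong to which class and verifying that the shared strips are precisely those hit by unstable manifolds of singularities.
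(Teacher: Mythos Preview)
Your sketch has the right spirit but diverges from the paper's actual constructions in both families, and in the second family there is a genuine gap.

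For the first family $X_{(n)}$, the paper does \emph{not} induct by grafting a single singularity near the overlap periodic orbit. Instead it takes the two-singularity example of \cite{ls}, removes a connected piece, and replaces it by a single plug $Z_n$ containing a \emph{chain} of $n$ Lorenz-like singularities linked by saddle connections $W^u(\sigma_i)\to W^s(\sigma_{i+2})$. The point of this design is that the cross section $R$ and the return map $\tilde G$ from \cite{ls} are left \emph{unchanged}; the hypotheses \textbf{(L1)--(L3)} still hold verbatim, so the two homoclinic classes $A^{\pm}$ are exactly the same objects built from $\tilde G$ as before, and their intersection is still the single periodic orbit $O$. Your approach of perturbing near $O$ would instead alter the return map and force you to re-verify from scratch that exactly two classes survive and that no extra intersection is created --- doable in principle, but the paper's chain-of-saddles plug avoids this entirely.

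For the second family $Y_{(n)}$, your bookkeeping is off and the mechanism you propose is not the one that is known to work. The three-singularity example from \cite{mp} has \emph{two} homoclinic classes, not three; the first example with three classes has \emph{five} singularities. The paper's construction is to glue copies of $GLA_{mod}$ (the geometric Lorenz attractor with two extra Lorenz-like singularities inserted on the branches of $W^u(\sigma)$) along the unstable manifold of a singularity of the previous stage. Each such gluing adds \emph{two} singularities and \emph{one} homoclinic class, so one obtains $(m+1)/2$ classes from $m=2n+1$ singularities, and then takes $Y_{(n)}=X_{(2n+1)}$. Your proposal of an ``ear'' carrying a single Lorenz-like singularity and its own homoclinic class does not describe any of the known building blocks: a plain GLA has one singularity but no natural interface for gluing along an unstable manifold, and it is precisely the two added singularities of $GLA_{mod}$ that create the interface and force the new class to meet the old ones only along $Cl(W^u(\sigma))$. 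Without that modification you have not explained how to attach the ear while (a) keeping sectional hyperbolicity across the glued boundary and (b) ensuring the new class intersects the others only in $Cl(W^u(\mathrm{Sing}))$; this is the substantive missing idea.
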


In section \ref{prel}, we briefly describe the construction and some important properties
for the known examples with two and three singularities. In section \ref{expar}, by using 
the techniques of the Venice masks with two singularities, we construct an example with four singular points.
In the same way, in Section \ref{eximpar}, from the Venice mask with three singularities, will be obtained an example with 
five equilibria being its maximal invariant set union of three homoclinic classes. Theorems \ref{thimpar} and \ref{thpar} 
will be obtained of a inductive process. Finally, {\em Theorem} \ref{thF'} will be a direct consequence of 
{\em Theorem} \ref{thimpar} and {\em Theorem} \ref{thpar}.



\section{Preliminaries}
\label{prel}

We make a brief description about the known Venice masks.\\

An example with a unique singularity was given in \cite{bmp}, and in \cite{mp} was proved that every Venice mask $X_{(1)}$ 
with one equilibrium satisfies the following properties:
\begin{itemize} 
 \item $M(X_{(1)})$ is union of two homoclinic classes $H^1_{X_{(1)}}$, $H^2_{X_{(1)}}$.
 \item $H^1_{X_{(1)}}\cap H^2_{X_{(1)}}=Cl(W^u_{X_{(1)}}(\sigma))$ where $\sigma$ is the singularity of $X_{(1)}$.
\end{itemize}

In \cite{ls} were exhibited two Venice masks containing two equilibria $\sigma_1$, $\sigma_2$.

For the first example we have a vector field $X$ verifying:
\begin{itemize}
 \item $M(X)$ is the union of two homoclinic class $H_X^1$, $H_X^2$.
 \item $H_X^1\cap H_X^2=O$, where $O$ is a hyperbolic periodic orbit.
 \item $O=\omega_X(q)$, for all $q\in W^u_X(\sigma_1)\cup W^u_X(\sigma_2)\setminus\{\sigma_1,\sigma_2\}$.
\end{itemize}

The vector field \index{Vector field}
$Y$ that determines the second example with two singularities $\sigma_1$, $\sigma_2$ satisfies:
\begin{itemize}
 \item $M(Y)$ is the union of two homoclinic class $H_Y^1$, $H_Y^2$.
 \item $H_Y^1\cap H_Y^2=Cl(W^u_Y(\sigma_1)\cup W^u_Y(\sigma_2))$.
\end{itemize}

An essential element to obtain the examples with two singularities is the existence of a return map defined in
a cross section \index{Cross section}
$R$. A foliation $\mathcal{F}$ is defined on $R$, which has vertical segments in the rectangular components 
$B, C, D, E$ and radial segments in the annuli components $A,F$. \\

We are interested to take a $C^{\infty}$ two-dimensional map $\tilde{G}:R\setminus\{d^-, d^+\}\to Int(R)$ such as
in \cite{ls}, satisfying
the hypotheses \textbf{(L1)-(L3)} established there. In particular, \textbf{(L1)} and \textbf{(L2)} imply the contraction 
and the invariance of the leaf $l$ by $\tilde{G}$. So, the map $\tilde{G}$ has a fixed point $P\in l$.
We define $H^+=A\cup B\cup C$ and $H^-=D\cup E\cup F$. For

$$A_{\tilde{G}}^-=Cl\left(\bigcap_{n\geq 1} \tilde{G}^n(H^{-}) \right), \qquad 
A_{\tilde{G}}^+=Cl\left(\bigcap_{n\geq 1} \tilde{G}^n(H^{+}) \right)$$

follow that $A_{\tilde{G}}^+$ and $A_{\tilde{G}}^-$ are homoclinic classes and $\{P\}= A_{\tilde{G}}^+\cap A_{\tilde{G}}^-$.

\begin{figure}
\begin{center}
\input{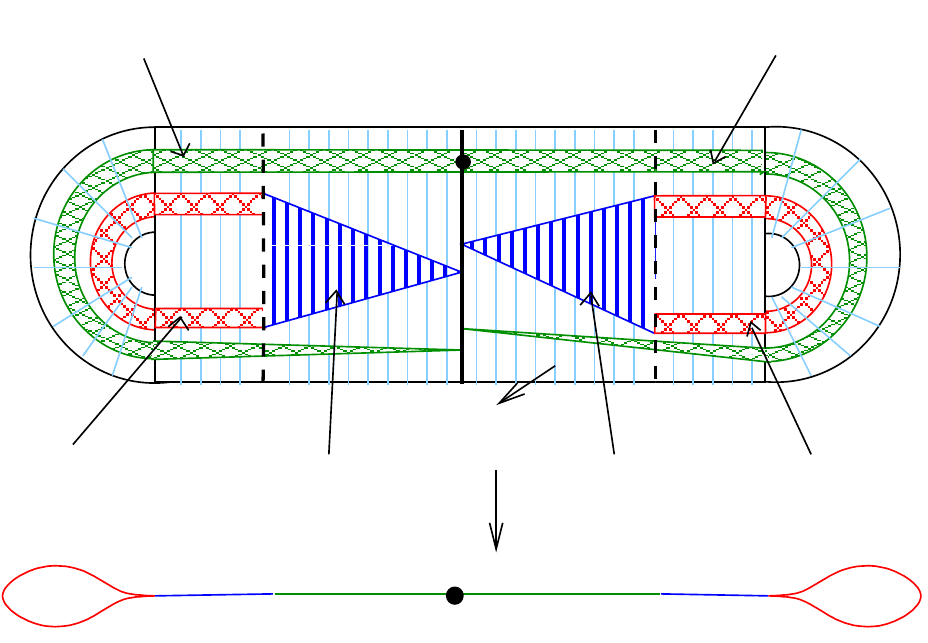_t}
\caption{\label{7} Two-dimensional map $\tilde{G}$ on region $R$.}
\end{center}
\end{figure}
\pagebreak
\newpage

The mode to obtain the example with three singularities described in \cite{mp} is easier. First of all, is
important to know some properties about the dynamic of the Geometric Lorenz Attractor \index{Geometric Lorenz Attractor}
(GLA for short) \cite{gw}.\\

In \cite{b} was proved that this attractor is a homoclinic class. The result is obtained due to the existence of a 
return map $F$ for the flow, defined on a cross section $\Sigma$. This map preserves the stable foliation $\mathcal{F}^s$,
where the leaves are vertical lines. 
The induced map $f$ in the leaf space is differentiable and expansive.\\

\begin{figure}
\begin{center}
\input{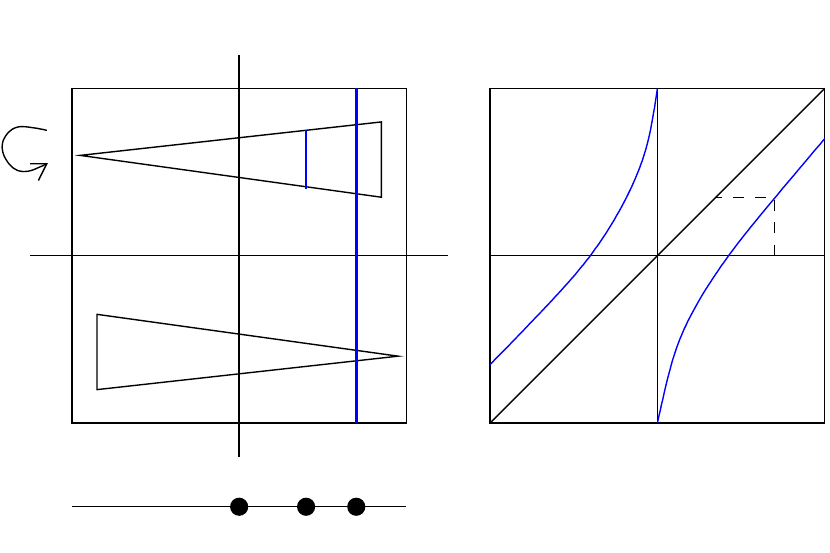_t}
\caption{\label{ss} Map $F$.}
\end{center}
\end{figure}

\begin{figure}
\begin{center}
\input{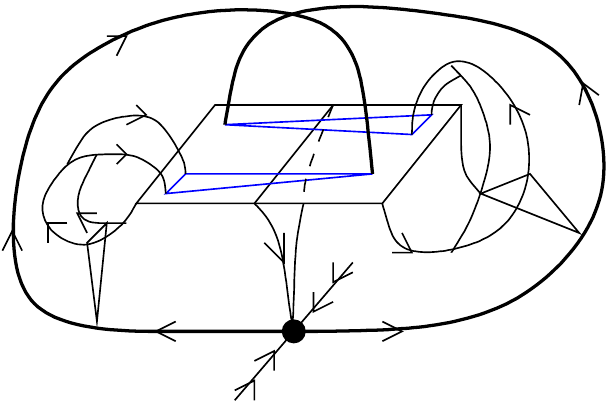_t}
\caption{\label{1o} Geometric Lorenz Attractor.}
\end{center}
\end{figure}

The GLA is modified in \cite{mp} by adding two singularities to the flow located at $W^u(\sigma)$. We called 
this modification as $GLA_{mod}$. We glue together in a $C^{\infty}$ fashion two copies of this flow along the
unstable manifold of the singularity $\sigma$, thus generating the flow depicted in Figure
\ref{ex3}. In this way is obtained a sectional-Anosov flow $X_{(3)}$ with dense periodic
orbits and three equilibria whose maximal invariant set is non-disjoint union of two homoclinic classes. In this case, 
the intersection between the homoclinic classes is $Cl(W^u_{X_{(3)}}(\sigma))$.\\

Observe that this flow is supported on a handlebody \index{Handlebody} of genus 4.


\section{Decomposing the maximal invariant set as union of two homoclinic classes}
\label{expar}

\subsection{Vector field $Z$}

We provide an example with four singularities. We start with the vector field $X$ 
associated to the Venice mask with two singularities. Then, we construct a plug \index{Plug} $Z_4$
containing two additional equilibria $\sigma_3$, $\sigma_4$. In this way, the flow is obtained 
through plug $Z_4$ surgery from one solid tritorus onto another manifold exporting some of its properties.\\

The vector field $X$ is supported on a solid tritorus $ST_1$. Now, we remove a connected component $B$ of $ST_1$ as
in Figure \ref{plug1}.\\

\begin{figure}
\begin{center}
\input{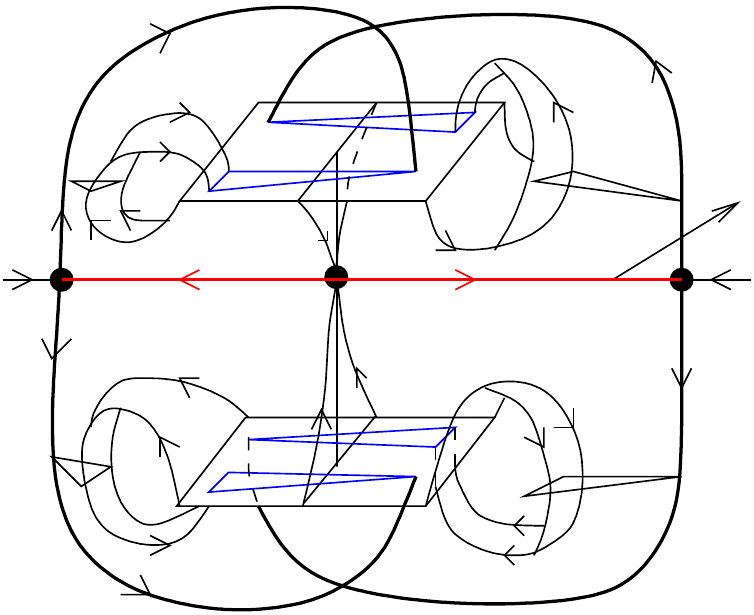_t}
\caption{\label{ex3} Example with three singularities.}
\end{center}
\end{figure}

The behavior across the faces removed is similar with respect to observed in the example given by the vector field 
$Y$ in \cite{ls}. 

\pagebreak
\newpage

On face $1$, we identify three regions determinated by the singular leaves saturated by 
the flow. In the middle region on face $1$, the trajectories crossing inward to $\partial A$, such as the branch 
unstable manifold of the two initial singularities. All trajectories are crossing inward to face
$2$ as $\partial A$.

\begin{figure}
\begin{center}
\input{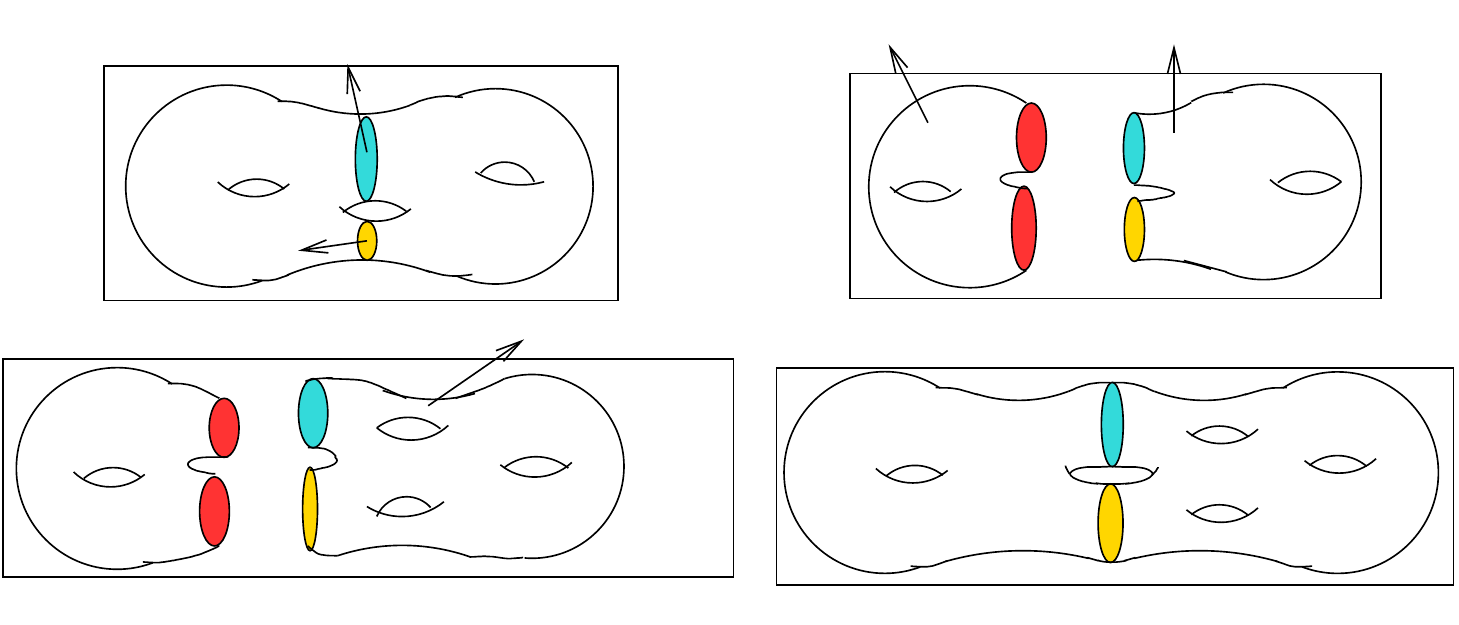_t}	
\caption{\label{plug1} Steps by gluing the new plug.}
\end{center}
\end{figure}


\begin{figure}
\begin{center}
\input{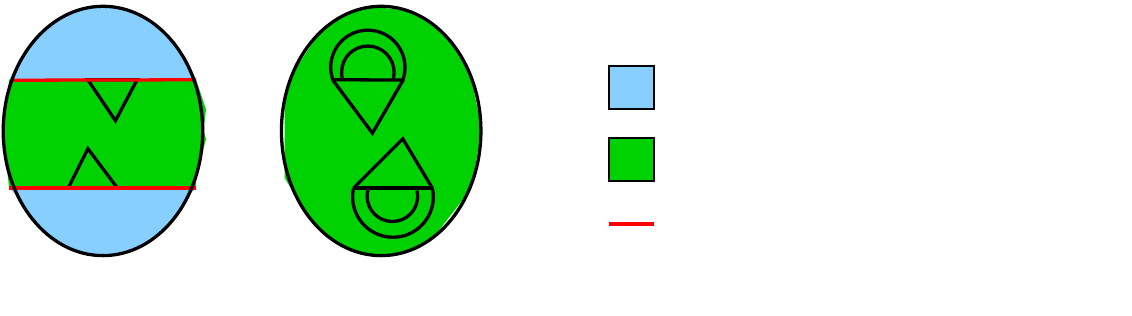_t}
\caption{\label{faces2} Faces.}
\end{center}
\end{figure}


As we before mention, will be constructed an adequate plug $Z_4$ to include the additional equilibria. We ask the 
singularities to be Lorenz-like. In this sense, is considered the Cherry flow and its deformation via DA-Attractor
\cite{r} to obtain a perturbed Cherry flow such as in \cite{ls}. Take the neighborhood $U$ of $\sigma$ such as in 
\cite{ls}. So, $U$ contains a source $\sigma$ and two
saddles $\sigma_1$, $\sigma_2$. A disk $D$ centered in $\sigma$, with the flow outwardly transverse to the $\partial D$
is removed. After that, the vector field over $U$ in the perturbed
Cherry flow is multiplied 
by a strong contraction $\lambda_{ss}$. Then, two Lorenz-like singularities $\sigma_1$ and $\sigma_2$ are obtained and one
hole $H_{\sigma}$ is generated.
\index{Lorenz-like singularity}

\pagebreak

To continue, is made a saddle-type connection between the
branches of the unstable manifold of $\sigma_1$ and the stable manifold of a singularity $\sigma_3$. 
In a similar way is made a saddle-type
connection between the branches of the unstable manifold of $\sigma_2$ and the stable manifold of a singularity $\sigma_4$.  
On the other hand, such as in the GLA (see \cite{b}), two holes $H_{\sigma_1}$, $H_{\sigma_2}$ are generated by
the unstable manifolds of the singularities
$\sigma_1, \sigma_2$ respectively. In each hole, there is a singular point $\rho_i$ with two complex eigenvalues $z_i$, 
$\bar{z_i}$ ($Re(z_i)>0$) and an eigenvalue $\lambda_i<0$. 

\begin{figure}
\begin{center}
{\scalebox{1.00}{\input{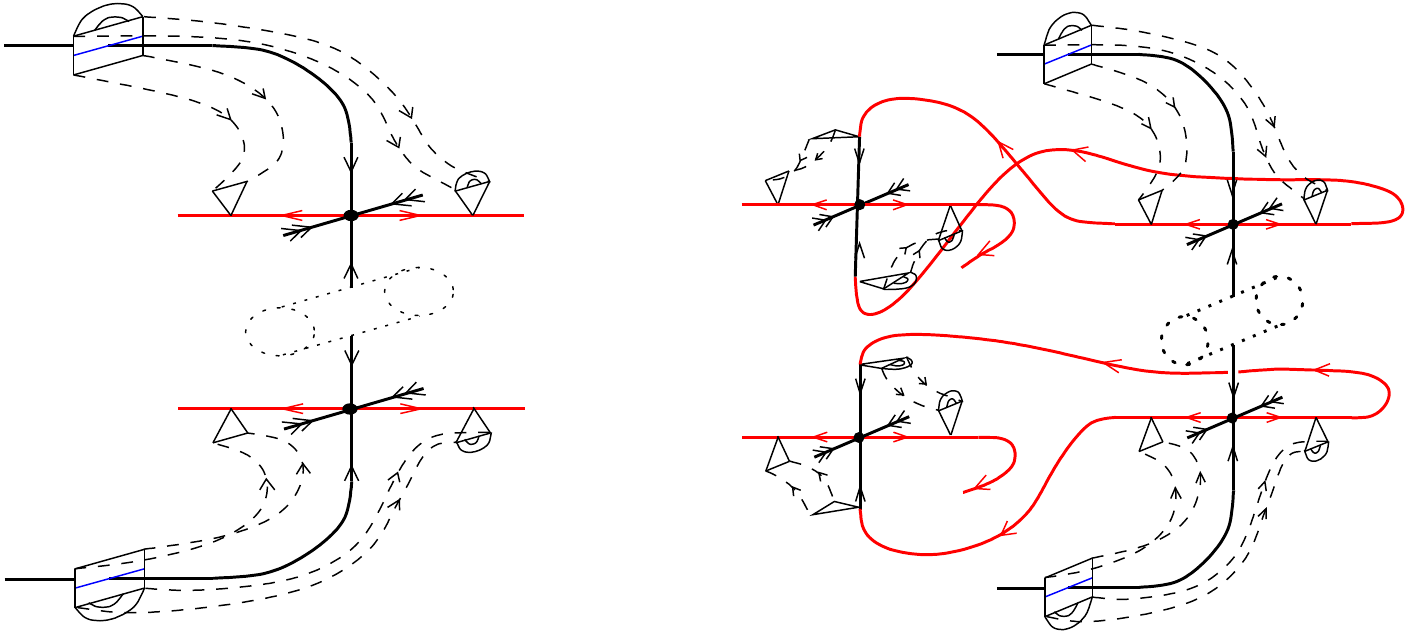_t}}}
\caption{\label{plugZ_4} Plug $Z_4$.}
\end{center}
\end{figure}

The holes $H_{\sigma_1}$, $H_{\sigma_2}$
are connected with the third hole $H_{\sigma}$ separating $\sigma_1$ and $\sigma_2$. The branches of the unstable 
manifolds associated to $\sigma_3$ and $\sigma_4$ cross the faces 1 and 2, and these go to the Plug 3 defined in \cite{ls}
such as the example given by the vector field $X$. See Figure \ref{exa4}.

Therefore, we obtain a handlebody $HB_5$ of genus five. So, the vector field $Z$ produced by gluing plug $Z_4$ instead
the removed connected component $B$, satisfies $Z_t(HB_5)\subset Int(HB_5)$ for all $t > 0$. Moreover $Z$ is
transverse to the boundary handlebody.\\ 
 
\begin{figure}
\begin{center}
\input{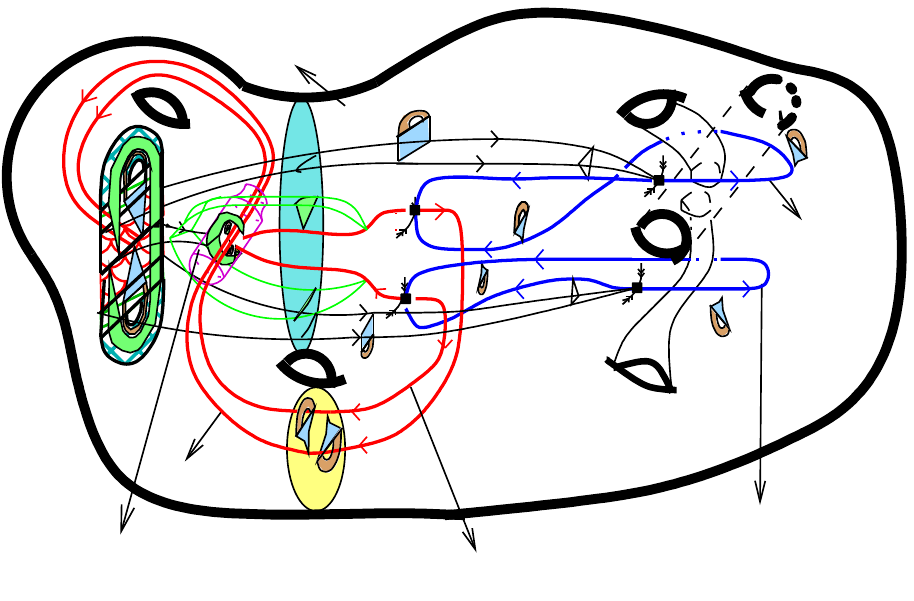_t}
\caption{\label{exa4} Venice mask with four singularities.}
\end{center}
\end{figure}


We exhibit with details the behavior near to the singularities. For that, we mention some facts that appear in
\cite{exm}. As every singularity is Lorenz-like, there exists
a center unstable manifold \index{Center unstable manifold} $W^{cu}_Z(\sigma_i)$ associated to $\sigma_i$ $(i=1,2,3,4)$.
It is divided by $W^u_Z(\sigma_i)$ and $W^s_Z(\sigma_i)\cap W^{cu}_Z(\sigma_i)$ in the four sectors $s_{11}, s_{12}, 
s_{21}, s_{22}$. There is also a projection $\pi: V_{\sigma_i}\to W^{cu}_Z(\sigma_i)$ defined in a neighborhood 
$V_{\sigma_i}$ of $\sigma_i$ via the strong stable foliation of the maximal invariant set associated to flow.


For $\sigma\in Sing(Z)$, we define the matrix

\begin{align*}
A(\sigma)=
\left(
\begin{array}{cc}
a_{11} & a_{12} \\
a_{21} & a_{22}
\end{array}
\right),
\end{align*}

where

$$a_{ij}=
\begin{cases}
1\qquad \textit{if }\sigma\in Cl(\pi(M(Z)\cap V_{\sigma}))\cap s_{ij} \\
0\qquad \textit{if }\sigma\notin Cl(\pi(M(Z)\cap V_{\sigma}))\cap s_{ij}. 
\end{cases}
$$

$A(\sigma)$ does not depend on the chosen center unstable manifold $W^{cu}_Z(\sigma)$.

Figure \ref{cusigma} shows the case for the singularity $\sigma_1$ of the example.\\

These are the associated matrices to the singularities of our vector field $Z$.
$$A_{\sigma_{1}}=
\left(
\begin{array}{cc}
1 & 1 \\
0 & 0
\end{array}
\right),
\quad
A_{\sigma_{2}}=
\left(
\begin{array}{cc}
0 & 0 \\
1 & 1
\end{array}
\right),
\quad
A_{\sigma_{3}}=
\left(
\begin{array}{cc}
1 & 0 \\
0 & 1
\end{array}
\right),
\quad
A_{\sigma_4}=
\left(
\begin{array}{cc}
0 & 1 \\
1 & 0
\end{array}
\right).
$$

\begin{figure}
\begin{center}
\input{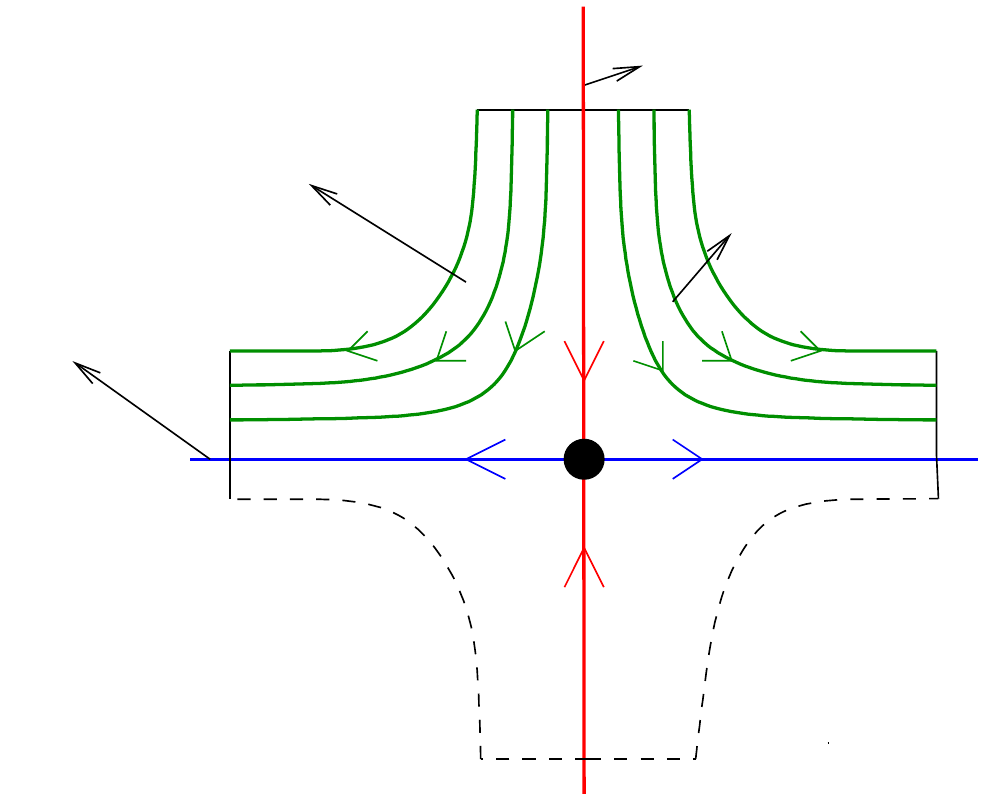_t}
\caption{\label{cusigma} Center unstable manifold of $\sigma_1$.}
\end{center}
\end{figure}


Now, we consider the following hypotheses.

\textbf{($Z$1)}: There are two repelling periodic orbits $O_1$, $O_2$ in $Int(HB_5)$ crossing the
holes of $R$.

\textbf{($Z$2)}: There are two solid tori neighborhoods $V_1, V_2 \subset Int(HB_5)$ of $O_1,O_2$ with
boundaries transverse to $Z_{t}$ such that if $N_4 = HB_5 \setminus (V_1 \cup V_2)$, then $N_4$ is a
compact neighborhood with smooth boundary transverse to $Z_{t}$ and $Z_{t}(N_4)\subset
N_4$ for $t>0$. $N_4$ is a handlebody of genus five with two solid tori removed. 

\textbf{($Z$3)}: $R \subset N_4$ and the return map $\tilde{G}$ induced by $Z$ in $R$ satisfies the properties
\textbf{(L1)-(L3)} given in \cite{ls}. Moreover,

$$\{q\in N : Z_{t}(q) \notin R, \forall t\in \re\} = Cl(W^u_Z(\sigma_1)\cup W^u_Z(\sigma_2)).$$

\begin{figure}
\begin{center}
\input{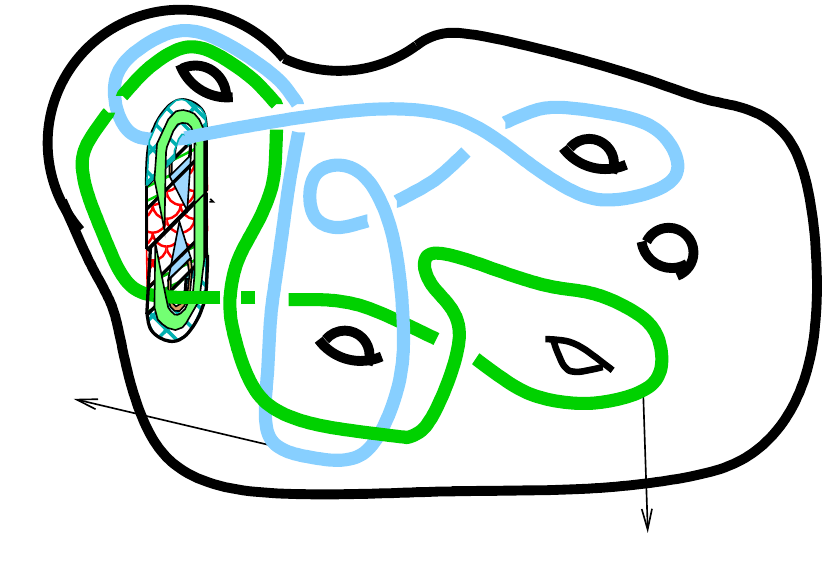_t}
\caption{\label{exa4b} The manifold $N_4$.}
\end{center}
\end{figure}

We define

$$ A_Z^+=Cl\left(\bigcup_{t\in\re} Z_{t}(A^+_{\tilde{G}})\right)\qquad
\text{and}\qquad A_Z^-=Cl\left(\bigcup_{t\in\re} Z_{t}(A^-_{\tilde{G}})\right).$$

\begin{prop}
$Z$ is a Venice mask with four singularities supported on the compact 3-manifold $N_4$. $N_4(Z)$ is the union
of two homoclinic classes $A_Z^+$, $A_Z^-$. The intersection between $A_Z^+$ and $A_Z^-$ is a hyperbolic 
periodic orbit $O$ contained in $Cl(W^u_Z(\sigma_3)\cup W^u_Z(\sigma_4))$.
\end{prop}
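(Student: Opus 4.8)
The plan is to verify the three assertions of the proposition in turn, leaning on the construction of $Z$ via plug surgery and on the analogous results for the two-singularity example $Y$ of \cite{ls}. First I would establish that $Z$ is a sectional-Anosov flow on $N_4$: by construction $Z$ is inwardly transverse to $\partial N_4$ and $Z_t(N_4)\subset N_4$ for $t>0$ (hypothesis $(Z2)$), so $N_4(Z)$ is well defined; the sectional-hyperbolic splitting is inherited from the original vector field $X$ outside the plug and from the Cherry-flow/DA construction inside $Z_4$, where the two new singularities $\sigma_3,\sigma_4$ are arranged to be Lorenz-like and the strong contraction by $\lambda_{ss}$ guarantees the required domination and sectional expansion. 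This is essentially the same verification as in \cite{ls}, now carried out with four equilibria instead of two, so I would state it as a routine consequence of the construction rather than redo the cone-field estimates.

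Next I would identify the maximal invariant set with $A_Z^+\cup A_Z^-$. The key tool is hypothesis $(Z3)$: every orbit that never meets the cross-section $R$ lies in $Cl(W^u_Z(\sigma_1)\cup W^u_Z(\sigma_2))$, and these unstable separatrices are asymptotic to the periodic orbit $O$ by the saddle connections built in the plug. For orbits that do meet $R$, the return map $\tilde G$ satisfies $(L1)$–$(L3)$, so exactly as in the preliminaries the sets $A_{\tilde G}^{\pm}=Cl(\bigcap_{n\ge1}\tilde G^n(H^{\pm}))$ are homoclinic classes of $\tilde G$ with $A_{\tilde G}^+\cap A_{\tilde G}^-=\{P\}$; saturating by the flow, $A_Z^{\pm}$ are homoclinic classes of $Z$ (the closure of the flow-saturation of a homoclinic class of a return map is the homoclinic class of the corresponding periodic orbit), and $M(Z)=N_4(Z)=A_Z^+\cup A_Z^-$ because $M(Z)$ is precisely the union of the points whose full orbit stays in $N_4$, split according to whether the orbit hits $R$ or not, with the non-returning part absorbed into $Cl(W^u_Z(\sigma_i))\subset A_Z^{\pm}$.

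Then I would compute the intersection. From $A_{\tilde G}^+\cap A_{\tilde G}^-=\{P\}$ and the fact that the saturation of $\{P\}$ by the flow is the periodic orbit $O$ through $P$, one gets $A_Z^+\cap A_Z^-\supseteq O$; the reverse inclusion follows because any point common to both saturated classes must have its $R$-itinerary in $A_{\tilde G}^+\cap A_{\tilde G}^-=\{P\}$ (using $(Z3)$ to handle the non-returning exceptional set, whose relevant part $Cl(W^u_Z(\sigma_3)\cup W^u_Z(\sigma_4))$ accumulates on $O$ and hence contributes nothing new to the intersection beyond $O$ itself). Finally, $O\subset Cl(W^u_Z(\sigma_3)\cup W^u_Z(\sigma_4))$ is exactly the content of the saddle connections made between $\sigma_3,\sigma_4$ and the dynamics in Plug~3, mirroring the property $O=\omega_X(q)$ for $q\in W^u_X(\sigma_1)\cup W^u_X(\sigma_2)$ of the first example in \cite{ls}. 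Non-transitivity is then immediate: $A_Z^+$ and $A_Z^-$ are proper compact invariant subsets covering $M(Z)$, so no single orbit is dense; density of periodic orbits in each piece follows from the homoclinic-class structure plus the birkhoff accumulation of periodic orbits on transverse homoclinic orbits.

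The main obstacle I expect is not any single estimate but the bookkeeping showing $M(Z)=A_Z^+\cup A_Z^-$ cleanly — in particular controlling the exceptional non-returning set in $(Z3)$ and checking that the matrices $A_{\sigma_i}$ (which record on which sectors of $W^{cu}_Z(\sigma_i)$ the projected maximal invariant set accumulates) are consistent with the claimed decomposition, i.e.\ that the contributions of $W^u_Z(\sigma_3),W^u_Z(\sigma_4)$ to the two classes overlap exactly along $O$. Once the combinatorics of the sectors and the saddle connections are pinned down, the homoclinic-class statements are formal consequences of the corresponding facts for $\tilde G$ established in \cite{ls}.
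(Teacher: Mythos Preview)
Your approach is correct and matches the paper's: both defer the sectional-Anosov, homoclinic-class, and Venice-mask verifications to \cite{ls}, obtain $A_Z^+\cap A_Z^-=O$ from $A_{\tilde G}^+\cap A_{\tilde G}^-=\{P\}$ together with hypothesis \textbf{($Z$3)}, and derive $O\subset Cl(W^u_Z(\sigma_3)\cup W^u_Z(\sigma_4))$ from the fact that the unstable branches of $\sigma_3,\sigma_4$ meet the invariant leaf $l$ (so $O\subset\omega_Z(q)$ for regular $q$ there, by \textbf{(L1)}--\textbf{(L2)}). One minor slip to correct: in the plug the separatrices $W^u_Z(\sigma_1),W^u_Z(\sigma_2)$ are not asymptotic to $O$ but are saddle-connected to $W^s_Z(\sigma_3),W^s_Z(\sigma_4)$ respectively --- it is $W^u_Z(\sigma_3),W^u_Z(\sigma_4)$ that reach Plug~3 and accumulate on $O$, exactly as you invoke later, so this does not affect your argument.
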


\begin{proof}
By construction $Z$ has four singularities. The proof to be $A_Z^+$, $A_Z^-$ homoclinic classes is the same given 
in \cite{ls}. Also the fact to be $Z$ a Venice mask. The intersection between the homoclinic classes is reduced 
to a hyperbolic periodic orbit $O$ because of $\{P\} = A^+_{\tilde{G}}\cap A^-_{\tilde{G}}$ and by hypotheses \textbf{($Z$3)}. 
Here, $O=O_Z(P)$. We observe that the branches of
the unstable manifolds of $\sigma_3$ and $\sigma_4$ intersect the leaf $l$ of the foliation $\mathcal{F}$ in $R$.
Then the hypotheses \textbf{(L1)}, \textbf{(L2)} of the map $\tilde{G}$, and the invariance of the flow imply
$O\subset \omega_Z(q)$ for all regular point $q\in W^u_Z(\sigma_3)\cup W^u_Z(\sigma_4)$. As $W^u_Z(\sigma_3)\subset A_Z^+$
and $W^u_Z(\sigma_4)\subset A_Z^-$ (see {\em Proposition 4.1} \cite{ls}) we conclude
$A_Z^+\cap A_Z^-\subset Cl(W^u_Z(\sigma_3)\cup W^u_Z(\sigma_4))$.
\end{proof}

\subsection{General case}

We expose a general result. More specifically the following theorem holds. 

\begin{thm}
\label{thpar}
For every $n\in\mathbb{Z}^+$, there exists a Venice mask $X_{(n)}$ with $n$ singularities supported on a handlebody \index{Handlebody}
$N_n$ of genus $n+1$ with two solid tori removed. $N_n(X_{(n)})$ is the non-disjoint union of two homoclinic classes,
\index{Homoclinic classes} and the intersection between them is a hyperbolic periodic orbit \index{Hyperbolic periodic
orbit} contained in $Cl(W^u(Sing(X_{(n)})))$.
\end{thm}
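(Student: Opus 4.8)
The natural approach is an induction on $n$, where the base case $n=2$ is the vector field $Y$ from \cite{ls} and the case $n=4$ is the vector field $Z$ just constructed in the previous Proposition; the inductive step repeats the plug-surgery construction. More precisely, the plan is to start from the Venice mask $X_{(n)}$ supported on $N_n$, a handlebody of genus $n+1$ with two solid tori removed, whose maximal invariant set is $A^+_{X_{(n)}}\cup A^-_{X_{(n)}}$ with intersection a hyperbolic periodic orbit $O\subset Cl(W^u(Sing(X_{(n)})))$. I would then remove a connected component $B$ near one of the Lorenz-like saddles — exactly as the component $B$ of $ST_1$ was removed in the passage from $X$ to $Z$ — and glue in a copy of the plug $Z_4$ (or rather the analogous plug carrying the two extra Lorenz-like equilibria $\sigma_{n+1},\sigma_{n+2}$ together with the two auxiliary complex saddles $\rho_i$ inside the holes generated by their unstable manifolds). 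This produces $X_{(n+2)}$ supported on $N_{n+2}$, a handlebody of genus $n+3$ with two solid tori removed, and increases the number of singularities by two while preserving the existence of a return map $\tilde G$ on a cross-section $R$ satisfying the hypotheses \textbf{(L1)--(L3)} of \cite{ls}. This handles all $n$ of one parity; the other parity is obtained by the same surgery starting from the two-singularity example rather than from an odd base, or by noting the construction is parity-insensitive once one checks the $n=3$ analogue; I would simply run two parallel inductions, one seeded at $n=2$ and one at a suitably modified $n=3$ (if $\mathbb Z^+$ here means $n\geq 1$, the $n=1$ case is the known example of \cite{bmp}).

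The verification at each inductive step then breaks into the same four assertions proved for $Z$. First, that $X_{(n+2)}$ is a sectional-Anosov flow transverse to $\partial N_{n+2}$ with $Z_t(N_{n+2})\subset Int(N_{n+2})$ for $t>0$: this is local near the newly inserted plug and follows because the new singularities are Lorenz-like by construction and the gluing is $C^\infty$, so sectional hyperbolicity of the splitting is inherited away from the plug and checked directly inside it, exactly as in \cite{ls} and \cite{mor3}. Second, that the two sets $A^\pm_{X_{(n+2)}}=Cl\bigl(\bigcup_{t\in\re}X_t(A^\pm_{\tilde G})\bigr)$ are homoclinic classes: this is verbatim the argument of \cite{ls}, since $A^\pm_{\tilde G}=Cl(\bigcap_{n\geq1}\tilde G^n(H^\pm))$ are homoclinic classes of the map $\tilde G$ and saturating by the flow preserves this. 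Third, that $X_{(n+2)}$ has dense periodic orbits but is not transitive — again inherited from the \textbf{(L1)--(L3)} structure of $\tilde G$, which forces $N_{n+2}(X_{(n+2)})=A^+\cup A^-$ with each piece carrying a dense set of periodic orbits, while the non-triviality of the decomposition prevents transitivity. Fourth, the intersection statement: $A^+\cap A^-$ reduces to the single hyperbolic periodic orbit $O=O_X(P)$ coming from the fixed point $\{P\}=A^+_{\tilde G}\cap A^-_{\tilde G}$, and the observation that the unstable branches of the two new singularities $\sigma_{n+1},\sigma_{n+2}$ meet the invariant leaf $l$ of $\mathcal F$ in $R$ — together with \textbf{(L1)},\textbf{(L2)} and flow-invariance — gives $O\subset\omega_X(q)$ for all regular $q\in W^u(\sigma_{n+1})\cup W^u(\sigma_{n+2})$, whence $A^+\cap A^-\subset Cl(W^u(Sing(X_{(n+2)})))$, using that $W^u(\sigma_{n+1})\subset A^+$ and $W^u(\sigma_{n+2})\subset A^-$ as in \emph{Proposition 4.1} of \cite{ls}.

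The genus bookkeeping is routine: each plug surgery removes one handlebody component and glues back a piece contributing two new handles (from the two new unstable manifolds $W^u(\sigma_{n+1}),W^u(\sigma_{n+2})$ crossing faces $1$ and $2$ and running into Plug $3$) while the two solid tori $V_1,V_2$ around the repelling periodic orbits $O_1,O_2$ are carried along unchanged, so genus goes from $n+1$ to $n+3$ and the two removed solid tori persist — consistent with the $n=2\to4$ step where genus $3$ became genus $5$. The main obstacle I anticipate is not any single inequality but the coherence of the gluing across the inductive step: one must check that after inserting the new plug the cross-section $R$ and its return map still satisfy \textbf{(L1)--(L3)} globally — in particular that hypothesis \textbf{($Z$3)}'s identity $\{q\in N:Z_t(q)\notin R,\ \forall t\}=Cl(W^u(\sigma_1)\cup W^u(\sigma_2))$ upgrades to the analogous statement with the full new singular set, i.e. that no regular orbit escapes $R$ except those on the unstable manifolds of the Lorenz-like singularities. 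This is where the careful description of the sectors $s_{ij}$, the matrices $A(\sigma)$, and the behavior of $\pi$ near each $\sigma_i$ is needed, and it is the step that must be written out with the figures rather than quoted from \cite{ls}.
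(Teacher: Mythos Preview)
Your proposal is reasonable but differs from the paper's argument in one structural way, and contains one small slip.

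\textbf{The slip.} The two-singularity base case must be the vector field $X$ of \cite{ls}, not $Y$: the theorem asserts that the two homoclinic classes intersect in a hyperbolic periodic orbit, which is the property of $X$ ($H^1_X\cap H^2_X=O$); for $Y$ the intersection is $Cl(W^u_Y(\sigma_1)\cup W^u_Y(\sigma_2))$, which does not match the statement.

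\textbf{The different route.} The paper does not induct on $n$. Instead, for every $n\geq 3$ it starts from the \emph{same} two-singularity example $X$ on the solid tritorus, removes the \emph{same} component $B$, and glues in a single plug $Z_n$ carrying all $n$ Lorenz-like singularities at once, arranged in a chain of saddle-type connections $W^u(\sigma_i)\to W^s(\sigma_{i+2})$ for $i=1,\dots,n-2$; only the last two singularities $\sigma_{n-1},\sigma_n$ send their unstable branches through faces $1$ and $2$ into Plug~3. Each of the $n-2$ saddle connections produces one hole, so the resulting handlebody has genus $n+1$, and the hypotheses $(Z_n1)$--$(Z_n3)$ are stated once for this fixed configuration. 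Crucially, the cross section $R$ and the return map $\tilde G$ are \emph{identical} for all $n$ (they live in the unchanged component $A$ and Plug~3), so properties \textbf{(L1)--(L3)}, the fixed point $P$, and the sets $A^\pm_{\tilde G}$ are inherited verbatim from \cite{ls}. This is why the paper can dismiss the parity issue entirely and conclude with ``the proof follows the same ideas to construct the example with four singularities.''

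Your inductive scheme --- excise a piece of $X_{(n)}$ and insert two more singularities to get $X_{(n+2)}$ --- is plausible, but it forces you to locate a suitable component to remove inside an already-modified manifold and then re-verify that $R$ and $\tilde G$ still satisfy \textbf{(L1)--(L3)} after each surgery; you correctly flag this coherence check as the main obstacle. The paper's direct construction avoids that obstacle altogether by never touching the part of the manifold containing $R$. What your approach buys is a cleaner genus count (genus increases by $2$ at each step, matching the $n\to n+2$ induction); what the paper's approach buys is that nothing about the return map, the homoclinic classes $A^\pm$, or the intersection $\{P\}=A^+_{\tilde G}\cap A^-_{\tilde G}$ needs to be re-proved.
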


\begin{proof}
$n=1,2,4$ is done. Consider $n\geq 3$. Again, we remove the same connected component $B$ to the manifold that 
supports the Venice mask $X$ with two equilibria.
We glue a plug $Z_n$ containing $n$ Lorenz-like singularities. For each singularity in $Z_n$, we have a
saddle-type  connection between $W^u_{X_{(n)}}(\sigma_i)$ and $W^s_{X_{(n)}}(\sigma_{i+2})$, $i=1,\ldots,n-2$.
For each saddle-type connection is produced a hole. Figure 
\ref{Plug $X_5$} exhibits the particular case for Plug \index{Plug} $Z_5$.
The branches of the unstable manifolds associated
to $\sigma_{n-1}$ and $\sigma_n$ cross the faces 1 and 2, and these go to the Plug 3.\\

So, the new manifold is a handlebody $HB_{n+1}$ of genus $n+1$ and supports a flow $X_{(n)_t}$ with $n$ equilibria. 
The flow is obtained by gluing plug $Z_n$ instead the connected component $B$. In this way, the vector field $X_{(n)}$
on $HB_{n+1}$ satisfies $X_{(n)_t}(HB_{n+1})\subset Int(HB_{n+1})$ for all $t > 0$. In addition, $X_{(n)}$ is
transverse to the boundary handlebody. 

Here, 

$$
A_{\sigma_{1}}=
\left(
\begin{array}{cc}
1 & 1 \\
0 & 0
\end{array}
\right),
\qquad 
A_{\sigma_{2}}=
\left(
\begin{array}{cc}
0 & 0 \\
1 & 1
\end{array}
\right),
\qquad
A_{\sigma_{2k-1}}=
\left(
\begin{array}{cc}
1 & 0 \\
0 & 1
\end{array}
\right),
\qquad A_{\sigma_{2k}}=
\left(
\begin{array}{cc}
0 & 1 \\
1 & 0
\end{array}
\right),$$

$k=2,\ldots,n/2$.

\begin{figure}
\begin{center}
\input{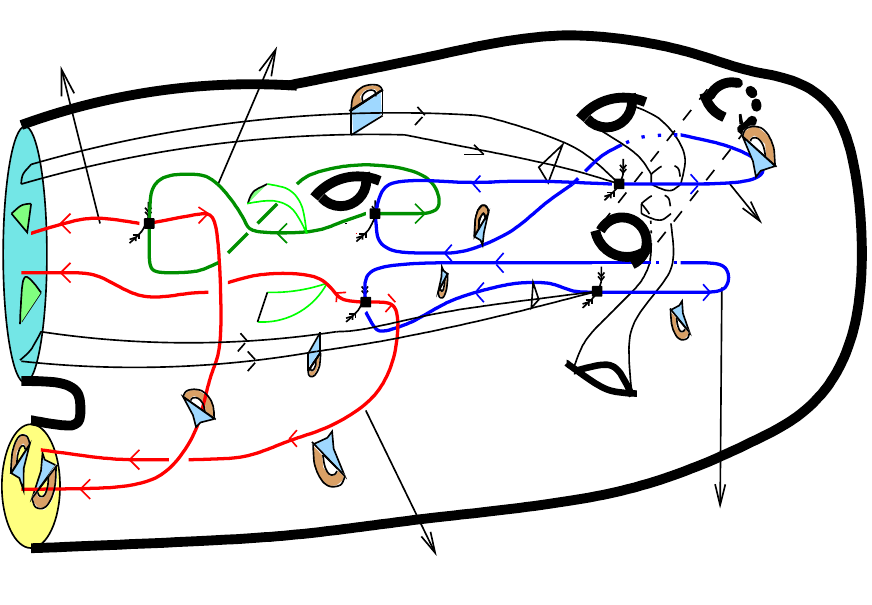_t}
\caption{\label{Plug $X_5$} Plug $Z_5$.}
\end{center}
\end{figure}

We assume $X_{(n)}$ satisfying the hypotheses:

\textbf{($Z_n$1)}: There are two repelling periodic orbits $O_1$, $O_2$ in $Int(HB_{n+1})$ crossing the
holes of $R$.

\textbf{($Z_n$2)}: There are two solid tori neighborhoods $V_1, V_2 \subset Int(HB_{n+1})$ of $O_1,O_2$ with
boundaries transverse to $X_{(n)_t}$ such that if $N_n = HB_{n+1} \setminus (V_1 \cup V_2)$, then $N_n$ is a
compact neighborhood with smooth boundary transverse to $X_{(n)_t}$ and $X_{(n)_t}(N_n)\subset
N_n$ for $t>0$. $N_n$ is a handlebody of genus $n+1$ with two solid tori removed. 

\textbf{($Z_n$3)}: $R \subset N_n$ and the return map \index{Return map} $\tilde{G}$ induced by $X_{(n)}$ in $R$ satisfies 
the properties \textbf{(L1)-(L3)} given in \cite{ls}. Moreover,

$$
\{q\in N_n : X_{(n)_t}(q) \notin R, \forall t\in \re\} = Cl\left(\bigcup_{m=1}^{n-2} W^u_{X_{(n)}}(\sigma_m)\right).
$$

We define

$$ A_{X_{(n)}}^+=Cl\left(\bigcup_{t\in\re} X_{(n)_t}(A^+_{\tilde{G}})\right)\qquad
\text{and}\qquad A_{X_{(n)}}^-=Cl\left(\bigcup_{t\in\re} X_{(n)_t}(A^-_{\tilde{G}})\right).$$

$A_{X_{(n)}}^+$ and $A_{X_{(n)}}^-$ are homoclinic classes \index{Homoclinic classes} for $X_{(n)}$. Moreover
$A_{X_{(n)}}^+\cup A_{X_{(n)}}^-=
N_n(X_{(n)})$ and $A_{X_{(n)}}^+\cap A_{X_{(n)}}^-=O$, where $O=O_{X_{(n)}}(P)$ with $P$ the fix 
point associated to map $\tilde{G}$ defined in $R$.\\

The proof follows the same ideas to construct the example with four singularities.
\end{proof}




\section{Decomposing the maximal invariant set as union of $n$ homoclinic classes}
\label{eximpar}

From Theorem \ref{thpar} follows the first part of the main statement of this work. 
For these examples, the maximal invariant set can be
decomposed as union of two homoclinic classes. Now, will be proved for each $n\geq 2$, the
existence of a Venice mask such that the maximal invariant set is union of $n$ homoclinic classes.\\

As was observed in Section \ref{prel}, Venice masks containing one or three equilibria have already been developed. 
To continue, we provide an example with five singularities. The idea is very simple. We just proceed such as the process made
to obtain the vector field $X_{(3)}$.\\

First of all, the GLA as sectional-Anosov flow, is supported on a solid bitorus (see \cite{b}). The holes on the manifold
are produced because of the branches of the unstable manifold \index{Unstable manifold} of the saddle-type singularity.
Therefore, $X_{(3)}$ is a Venice mask defined on a handlebody of genus 4. The holes are generated by the branches of the
unstable manifolds of $\sigma_1$ and $\sigma_2$.\\

Now, for the vector field $X_{(3)}$, we add two Lorenz-like singularities \index{Lorenz-like singularity} located at the 
branches of $W^u_{X_{(3)}}(\sigma_2)$. We glue together in a $C^{\infty}$ fashion one copy of $GLA_{mod}$ along the
unstable manifold of the singularity $\sigma_2$. Thus is obtained the vector field $X_{(5)}$ whose flow is
depicted in Figure \ref{exa5}.\\

For each $i=1,2,3,$ there is a cross section \index{Cross section} $\Sigma_i$ and return map $F_i$ such that 

$$\Lambda_i=Cl\left(\bigcap_{n\geq 0} F_i^n(\Sigma_i)\right)$$

is a homoclinic class for $F_i$. Therefore

$$H_i=Cl\left(\bigcup_{t\in\re} X_{(5)}(\Lambda_i)\right)$$

is a homoclinic class for flow $X_{(5)}$. Moreover, $H_1\cap H_2\subset Cl(W^u_{X_{(5)}}(\sigma))$, $H_1\cap H_3\subset 
Cl(W^u_{X_{(5)}}(\sigma_2))$ and $H_2\cap H_3 \subset Cl(W^u_{X_{(5)}}(\sigma_2))$.

\begin{figure}
\begin{center}
{\scalebox{1.00}{\input{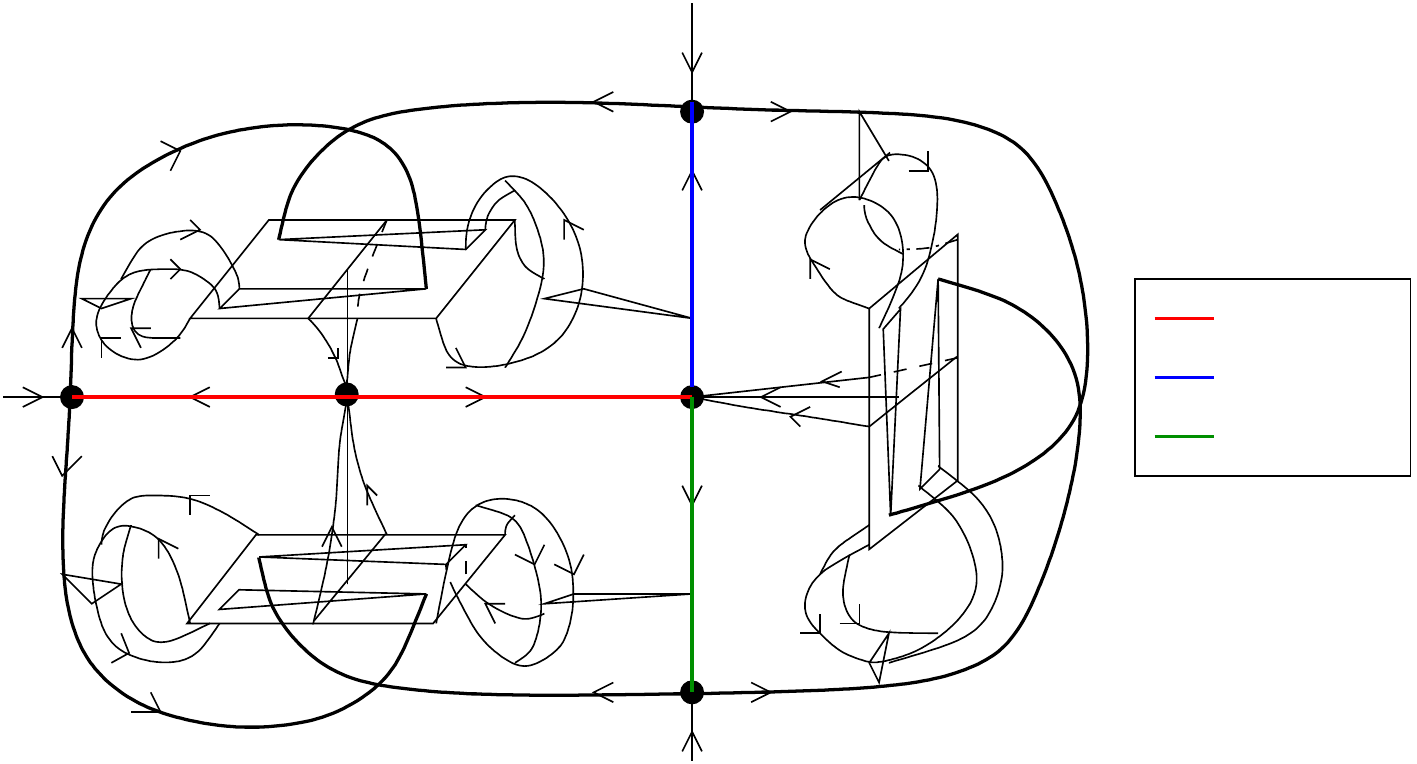_t}}}
\caption{\label{exa5} Venice mask with five singularities.}
\end{center}
\end{figure}

\begin{prop}
$X_{(5)}$ is a Venice mask \index{Venice mask} supported on a handlebody $HB_6$ of genus 6. The maximal invariant set 
$HB_6(X_{(5)})$ is non-disjoint union of three homoclinic classes. \index{Homoclinic classes}
The intersection between two different homoclinic classes is contained
in $Cl(W^u(Sing(X_{(5)})))$.

\end{prop}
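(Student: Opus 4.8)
The plan is to follow the same strategy already used to construct and analyze $X_{(3)}$, now applied to the doubled copy of $GLA_{mod}$ glued onto the branch $W^u_{X_{(3)}}(\sigma_2)$. First I would verify that $X_{(5)}$ is a sectional-Anosov flow: the construction glues together pieces (the GLA with its added Lorenz-like singularities, i.e. $GLA_{mod}$) along unstable manifolds in a $C^\infty$ fashion, and since each piece carries a sectional-hyperbolic splitting compatible across the gluing (the gluing is done along the one-dimensional $W^u$ of a Lorenz-like singularity, where the central direction is preserved), the maximal invariant set $HB_6(X_{(5)})$ inherits a sectional-hyperbolic splitting; the singularities are hyperbolic (Lorenz-like) by construction. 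I would then check transversality to $\partial HB_6$ and that $HB_6$ has genus $6$ — this is just a matter of counting handles: the original GLA lives on a solid bitorus (genus $2$), each added pair of Lorenz-like singularities with its saddle-type connection creates holes, and the bookkeeping done for $X_{(3)}$ on genus $4$ gives genus $6$ after adding the second copy.

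Next I would establish the decomposition $HB_6(X_{(5)}) = H_1 \cup H_2 \cup H_3$. For each $i=1,2,3$ there is a cross section $\Sigma_i$ with a return map $F_i$ preserving a stable foliation with vertical leaves, for which the induced one-dimensional map $f_i$ is differentiable and expansive; by the argument of \cite{b} (the GLA is a homoclinic class), $\Lambda_i = Cl\!\left(\bigcap_{n\geq 0} F_i^n(\Sigma_i)\right)$ is a homoclinic class for $F_i$, so its flow saturation $H_i = Cl\!\left(\bigcup_{t\in\re} X_{(5)_t}(\Lambda_i)\right)$ is a homoclinic class for $X_{(5)}$. That every point of the maximal invariant set lies in some $H_i$ follows because, away from the finitely many singularities and their invariant manifolds, every regular orbit of the maximal invariant set must eventually return to one of the three cross sections (this is exactly the non-wandering/return structure engineered in the construction, analogous to \textbf{($Z_n$3)}), hence belongs to the corresponding $\Lambda_i$ after saturation; the singularities and the arcs $W^u_{X_{(5)}}(\sigma),W^u_{X_{(5)}}(\sigma_2)$ themselves are limits of periodic orbits and so lie in the closures $H_i$. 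Density of periodic orbits then follows from density of periodic points for each expansive $f_i$ together with the fact that the three homoclinic classes together exhaust $HB_6(X_{(5)})$; intransitivity follows because the three cross sections capture genuinely distinct recurrent behaviors — no single orbit is dense — which is the standard Venice-mask mechanism (two "wings" whose only common recurrence is along unstable separatrices).

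Then I would pin down the intersections. The key point is that the gluings are performed along the unstable separatrices: $H_1$ and $H_2$ are the two "wings" of the first (modified) Lorenz piece, meeting only along $Cl(W^u_{X_{(5)}}(\sigma))$; while $H_3$ is the wing contributed by the second copy, attached along $W^u_{X_{(3)}}(\sigma_2)$, so $H_1 \cap H_3$ and $H_2 \cap H_3$ are forced into $Cl(W^u_{X_{(5)}}(\sigma_2))$. Concretely, any point in $H_i \cap H_j$ with $i\neq j$ is a limit of periodic orbits of both classes; such an orbit, tracked through the return maps, can only transit from the domain controlled by $F_i$ to that controlled by $F_j$ through a region whose $\omega$-limit (for the relevant regular points) is forced by hypotheses \textbf{(L1)}–\textbf{(L2)}-type contraction to accumulate on the separatrix along which the gluing was made — exactly as in \cite{ls} and in the Proposition for $Z$ above. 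Assembling these three containments gives that the intersection of any two distinct homoclinic classes of $HB_6(X_{(5)})$ is contained in $Cl(W^u(Sing(X_{(5)})))$.

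The main obstacle I expect is the verification that the three homoclinic classes actually \emph{cover} the whole maximal invariant set and that their pairwise intersections are no larger than the claimed unstable-separatrix sets — i.e. controlling the global return dynamics rather than the local sectional-hyperbolicity bookkeeping. This requires showing that the complement in $HB_6$ of the union of the three cross sections' forward-return domains is exactly $Cl\!\left(W^u_{X_{(5)}}(\sigma)\cup W^u_{X_{(5)}}(\sigma_2)\right)$ (the analogue of the displayed identity in \textbf{($Z_n$3)}), which is where the careful $C^\infty$ gluing along separatrices and the transversality of those separatrices to the cross sections must be used; everything else is a transcription of the $X_{(3)}$ argument with one extra piece.
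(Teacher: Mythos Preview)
Your approach is essentially the same as the paper's: the paper does not give a separate formal proof of this Proposition but rather treats the construction described immediately before it (the three cross sections $\Sigma_i$ with return maps $F_i$, the identification of $\Lambda_i$ and $H_i$ via the argument of \cite{b}, and the containments $H_1\cap H_2\subset Cl(W^u_{X_{(5)}}(\sigma))$, $H_i\cap H_3\subset Cl(W^u_{X_{(5)}}(\sigma_2))$) as the justification. Your write-up is a correct, more detailed elaboration of exactly this scheme, including the genus count and the analogue of the \textbf{($Z_n$3)} hypothesis that you correctly flag as the delicate point.
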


It is possible to continue gluing copies of $GLA_{mod}$ to produce Venice masks with any odd number of equilibria.
Each copy is glued along the unstable manifold of some singularity $\sigma_i$. The equilibrium $\sigma_i$ is chosen such that 
were previously possible to add two Lorenz-like singularities in its unstable manifold, one on each branch. More
specifically, each $\sigma_i$ is selected to add two new singular points if previously we have

$$A_{\sigma_{i}}\neq
\left(
\begin{array}{cc}
1 & 1 \\
1 & 1
\end{array}
\right).$$

In this way, the following theorem holds.

\pagebreak
\newpage

\begin{thm}
\label{thimpar}
For every $n$ odd ($n\geq 3$), there exists a Venice mask $X_{(n)}$ with $n$ singularities supported on a handlebody 
$HB_{n+1}$ of genus $n+1$. The maximal invariant set $HB_{n+1}(X_{(n)})$ is non-disjoint union of $(n+1)/2$ homoclinic
classes. The intersection between two different homoclinic classes is contained in $Cl(W^u(Sing(X_{(n)})))$.
\end{thm}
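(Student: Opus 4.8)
The plan is to establish Theorem~\ref{thimpar} by induction on the odd integer $n\geq 3$, using the already-constructed base cases $n=3$ (the vector field $X_{(3)}$, recalled in Section~\ref{prel}) and $n=5$ (the vector field $X_{(5)}$ and its Proposition above). The inductive step is the gluing surgery described in the paragraph preceding the statement: given a Venice mask $X_{(n)}$ with $n$ odd singularities on $HB_{n+1}$, whose maximal invariant set is a non-disjoint union of $(n+1)/2$ homoclinic classes, I would select a singularity $\sigma_i$ of $X_{(n)}$ with $A_{\sigma_i}\neq\left(\begin{smallmatrix}1&1\\1&1\end{smallmatrix}\right)$ — such a singularity must exist because the newly added Lorenz-like singularities at each surgery step have associated matrices of the form $\left(\begin{smallmatrix}1&0\\0&1\end{smallmatrix}\right)$ or $\left(\begin{smallmatrix}0&1\\1&0\end{smallmatrix}\right)$, i.e. with a zero entry — and glue, in a $C^{\infty}$ fashion, a copy of $GLA_{mod}$ along one branch of $W^u_{X_{(n)}}(\sigma_i)$, adding two new Lorenz-like singularities, one on each branch of the glued copy's distinguished unstable manifold. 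This produces $X_{(n+2)}$ on a handlebody of genus $(n+2)+1=n+3$.

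Next I would verify that $X_{(n+2)}$ is still a sectional-Anosov flow, hence a candidate Venice mask. Since the surgery is performed along an unstable manifold of a Lorenz-like singularity and the glued piece is a copy of $GLA_{mod}$ (which carries a sectional-hyperbolic splitting compatible, after the $C^\infty$ matching, with the strong stable foliation of the ambient flow), the sectional-hyperbolic splitting $T_{M(X_{(n+2)})}M = F^s\oplus F^c$ extends across the gluing region; the new singularities are hyperbolic and Lorenz-like by construction, and the flow remains inwardly transverse to the boundary handlebody with $X_{(n+2)_t}(HB_{n+3})\subset Int(HB_{n+3})$ for $t>0$. Density of periodic orbits passes to $X_{(n+2)}$ because each of the return maps $F_j$ on cross sections $\Sigma_j$ is (as in \cite{b}) expansive with dense periodic points in its maximal invariant set $\Lambda_j$, and the flow saturations $H_j = Cl(\bigcup_{t\in\re}X_{(n+2)_t}(\Lambda_j))$ are homoclinic classes whose union is $HB_{n+3}(X_{(n+2)})$; non-transitivity survives because the decomposition into several homoclinic classes with small intersection obstructs a dense orbit (a transitive point would have to meet all pieces, but the intersections are confined to unstable manifolds of singularities, which are not transitive).

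Then I would count the homoclinic classes and locate their intersections. The surgery along $W^u_{X_{(n)}}(\sigma_i)$ replaces the portion of dynamics near that branch by one more copy of the Lorenz mechanism, creating exactly one additional cross section $\Sigma_{(n+3)/2}$ with its own return map and homoclinic class $H_{(n+3)/2}$; so the number of homoclinic classes goes from $(n+1)/2$ to $(n+1)/2 + 1 = (n+3)/2 = ((n+2)+1)/2$, as required. For the intersections, any two homoclinic classes that already existed in $X_{(n)}$ keep their intersection inside $Cl(W^u(Sing(X_{(n)})))\subset Cl(W^u(Sing(X_{(n+2)})))$ by the inductive hypothesis; a new pair $\{H_j, H_{(n+3)/2}\}$ can only meet where the old dynamics and the new glued copy are joined, namely along $Cl(W^u_{X_{(n+2)}}(\sigma_i))$ together with the unstable manifolds of the two newly added singularities — all of which lie in $Cl(W^u(Sing(X_{(n+2)})))$. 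This is checked exactly as in the $X_{(3)}$ and $X_{(5)}$ cases, where the intersections were shown to be $Cl(W^u_{X_{(3)}}(\sigma))$ and $Cl(W^u_{X_{(5)}}(\sigma))$, $Cl(W^u_{X_{(5)}}(\sigma_2))$ respectively.

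The main obstacle I anticipate is not the bookkeeping but verifying that the $C^{\infty}$ gluing of $GLA_{mod}$ along a single branch of an unstable manifold genuinely preserves the sectional-hyperbolic splitting of the full maximal invariant set — i.e. that the contraction of $F^s$ and the sectional expansion of $F^c$ in Definition~\ref{secflow} hold uniformly across the interface, and that the strong stable foliations of the two pieces match to the required order so that the projections $\pi: V_{\sigma}\to W^{cu}(\sigma)$ behave as stated. This is the same technical point that underlies the constructions in \cite{mp} and \cite{ls}, and I would handle it by performing the identification in a flow-box neighborhood where both flows are already linear (the perturbed Cherry/DA model near the new saddles and the linearized Lorenz model near $\sigma_i$), choosing the matching diffeomorphism to carry one strong stable foliation onto the other, so that the estimates are inherited piecewise and then glued by a standard cone-field argument. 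The remaining verification — that $X_{(n+2)}$ is non-transitive — is then a consequence of the multi-class decomposition exactly as recorded for $X_{(3)}$ and $X_{(5)}$.
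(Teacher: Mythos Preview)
Your proposal is correct and follows essentially the same approach as the paper: an inductive gluing of copies of $GLA_{mod}$ along the unstable manifold of a singularity $\sigma_i$ satisfying $A_{\sigma_i}\neq\left(\begin{smallmatrix}1&1\\1&1\end{smallmatrix}\right)$, starting from the base cases $X_{(3)}$ and $X_{(5)}$. Your write-up is in fact more detailed than the paper's (which leaves the verification of sectional hyperbolicity, density of periodic orbits, and the homoclinic-class count largely implicit); the only slip is that the gluing is along the full unstable manifold of $\sigma_i$ rather than ``one branch,'' matching how $X_{(3)}$ and $X_{(5)}$ are built.
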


{\em Theorem} \ref{thF'} follows from {\em Theorem} \ref{thpar} and {\em Theorem} \ref{thimpar}.


\bibliographystyle{acm}
\bibliography{RCMBibTeX}

\begin{thebibliography}{10}

\bibitem{abs}
{\sc Afraimovich, V.~S., Bykov, V.~V., and Shilnikov, L.~P.}
\newblock On structurally unstable attracting limit sets of lorenz attractor
  type.
\newblock {\em Trudy Moskov. Mat. Obshch. 44}, 2 (1982), 150--212.

\bibitem{apu}
{\sc Arroyo, A., and Pujals, E.}
\newblock Dynamical properties of singular-hyperbolic at- tractors.
\newblock {\em Discrete Contin. Dyn. Syst. 19}, 1 (2007), 67--87.

\bibitem{b}
{\sc Bautista, S.}
\newblock The geometric lorenz attractor is a homoclinic class.
\newblock {\em Bol. Mat.(NS) 11}, 1 (2004), 69--78.

\bibitem{lec}
{\sc Bautista, S., and Morales, C.~A.}
\newblock Lectures on sectional-anosov flows.
\newblock http://preprint.impa.br/Shadows/SERIE{\_}D/2011/86.html.

\bibitem{bamo}
{\sc Bautista, S., and Morales, C.~A.}
\newblock On the intersection of sectional-hyperbolic sets.
\newblock {\em J. of Modern Dynamics 10}, 1 (2016), 1--16.

\bibitem{bmp}
{\sc Bautista, S., Morales, C.~A., and Pacifico, M.~J.}
\newblock On the intersection of homoclinic classes on singular-hyperbolic
  sets.
\newblock {\em Discrete and continuous Dynamical Systems 19}, 4 (2007),
  761--775.

\bibitem{bpv}
{\sc Bonatti, C., Pumari\~no, A., and Viana, M.}
\newblock Lorenz attractors with arbitrary expanding dimension.
\newblock {\em C. R. Acad. Sci. Paris S\'er. I Math. 325}, 8 (1997), 883--888.

\bibitem{gah}
{\sc G{\"a}hler, S.}
\newblock Lineare 2-normierte r{\"a}ume.
\newblock {\em Mathematische Nachrichten 28}, 1-2 (1964), 1--43.

\bibitem{gw}
{\sc Guckenheimer, J., and Williams, R.~F.}
\newblock Structural stability of lorenz attractors.
\newblock {\em Publications Math{\'e}matiques de l'IH{\'E}S 50}, 1 (1979),
  59--72.

\bibitem{hem}
{\sc Hempel, J.}
\newblock {\em 3-manifolds}.
\newblock Ann. of Math. Studies, No. 86. Princeton University Press, Princeton,
  N. J.; University of Tokyo Press, Tokyo, 1976.

\bibitem{hps}
{\sc Hirsch, M.~W., Pugh, C.~C., and Shub, M.}
\newblock {\em Invariant manifolds}, vol.~583.
\newblock Springer Berlin, 1977.

\bibitem{kat}
{\sc Kawaguchi, A., and Tandai, K.}
\newblock On areal spaces i.
\newblock {\em Tensor NS 1\/} (1950), 14--45.

\bibitem{ls}
{\sc L\'opez~Barragan, A.~M., and S\'anchez, H. M.~S.}
\newblock Sectional anosov flows: Existence of venice masks with two
  singularities.
\newblock {\em Bulletin of the Brazilian Mathematical Society, New Series 48},
  1 (2017), 1--18.

\bibitem{mem}
{\sc Metzger, R., and Morales, C.~A.}
\newblock Sectional-hyperbolic systems.
\newblock {\em Ergodic Theory and Dynamical Systems 28}, 05 (2008), 1587--1597.

\bibitem{mor3}
{\sc Morales, C.}
\newblock Singular-hyperbolic attractors with handlebody basins.
\newblock {\em Dyn. Control Syst. 13}, 1, 15--24.

\bibitem{exm}
{\sc Morales, C.~A.}
\newblock Examples of singular-hyperbolic attracting sets.
\newblock {\em Dynamical Systems 22}, 3 (2007), 339--349.

\bibitem{mo}
{\sc Morales, C.~A.}
\newblock Sectional-anosov flows.
\newblock {\em Monatshefte f{\"u}r Mathematik 159}, 3 (2010), 253--260.

\bibitem{mp2}
{\sc Morales, C.~A., and Pac{\'i}fico, M.~J.}
\newblock Sufficient conditions for robustness of attractors.
\newblock {\em Pacific journal of mathematics 216}, 2 (2004), 327--342.

\bibitem{mp}
{\sc Morales, C.~A., and Pac{\'i}fico, M.~J.}
\newblock A spectral decomposition for singular-hyperbolic sets.
\newblock {\em Pacific Journal of Mathematics 229}, 1 (2007), 223--232.

\bibitem{mpp4}
{\sc Morales, C.~A., Pac{\'i}fico, M.~J., and Pujals, E.~R.}
\newblock Robust transitive singular sets for 3-flows are partially hyperbolic
  attractors or repellers.
\newblock {\em Annals of mathematics\/} (2004), 375--432.

\bibitem{mv}
{\sc Morales, C.~A., and Vilches, M.}
\newblock On 2-riemannian manifolds.
\newblock {\em SUT J. Math. 46}, 1 (2010), 119--153.

\bibitem{r}
{\sc Reis, J.}
\newblock Infinidade de \'orbitas peri\'odicas para fluxos seccional anosov.
\newblock {\em Tese de Doutorado, UFRJ\/} (2011).

\bibitem{sma}
{\sc Smale, S.}
\newblock Differentiable dynamical systems.
\newblock {\em Bulletin of the American mathematical Society 73}, 6 (1967),
  747--817.

\end{thebibliography}


\medskip

\flushleft
S. Bautista.\\
Departamento de Matem\'aticas, Universidad Nacional de Colombia.\\
Bogot\'a, Colombia.\\
E-mail: sbautistad@unal.edu.co

\flushleft
A. M. L\'opez B.\\
Instituto de Ci\^encias Exatas \small{(ICE)}, Universidade Federal Rural do Rio de Janeiro.\\
Departamento de Matem\'atica, Serop\'edica, Brazil.\\
E-mail: barragan@im.ufrj.br

\flushleft
H. M. S\'anchez.\\
Departamento de Matem\'aticas, Universidad Central.\\
Bogot\'a, Colombia.\\
E-mail: hmsanchezs@unal.edu.co

\end{document}